\theoremstyle{plain}
\newtheorem{thm}{Theorem}[section]
\newtheorem{prop}[thm]{Proposition}
\theoremstyle{remark}
\newtheorem{rem}{\bf{Remark}}
\numberwithin{equation}{section}
\newcommand{\E}{\mathbb{E}}
\newcommand{\N}{\mathbb{N}}
\newcommand{\R}{\mathbb{R}}
\newcommand{\T}{\mathbb{T}}
\newcommand{\Z}{\mathbb{Z}}
\newcommand{\cA}{\mathcal{A}}
\newcommand{\cB}{\mathcal{B}}
\newcommand{\Lip}{{\rm Lip\,}}
\newcommand{\al}{\alpha}
\newcommand{\del}{\delta}
\newcommand{\ep}{\varepsilon}
\newcommand{\sig}{\sigma}
\newcommand{\Del}{\Delta}
\newcommand{\Gam}{\Gamma}
\newcommand{\Sig}{\Sigma}
\newcommand{\ol}{\overline}
\begin{document}
\title[Nonconvex Mean Field Games]{A Note on Nonconvex Mean Field Games}

\author{Hung Vinh Tran}
\address[Hung V. Tran]
{
Department of Mathematics, 
University of Wisconsin Madison, Van Vleck hall, 480 Lincoln drive, Madison, WI 53706, USA}
\email{hung@math.wisc.edu}

\thanks{
The author is partially supported in part by NSF grant DMS-1615944. 
}

\keywords{Mean Field Games; zero-sum differential games; nonconvex setting; }
\subjclass[2010]{
35B10 % Periodic solutions 
35B20 %Perturbations 
35K40  %Second-order parabolic systems
35D40 %Viscosity solutions 
35F21 % Hamilton-Jacobi equations 
}

\date{\today}
\maketitle

\begin{abstract}
We introduce a nonconvex Mean Field Games system by studying a model with
 a large number of identical pairs of players who are all rational, 
and each pair plays an identical zero-sum differential game.
We study existence and uniqueness of solutions for a simple  system in this context.
\end{abstract}

\tableofcontents

%%%%%%%%%%%%%%%%%%%%%%%%%%%%%%%%%%%%%%%%%%%%%%%%%%%%%%%%%%%%%%%%%%%%%%%%%%%%%%%%%%%%%%%%%%%%%%%%%%%%%%%%%%%%%%%%%%%%%%%%%%%%%%%%%%%%

\section{Introduction}
\subsection{Heuristic derivation}
Mean Field Games were introduced independently by Caines, Huang, Malham\'e \cite{CHM1, CHM2} and Lasry, Lions \cite{LaLi1, LaLi2, LaLi3} 
to study systems with large numbers of identical agents in competition. 
In the competition, each agent is rational and seek to optimize a value (payoff) functional by choosing appropriate controls. 
The interactions between them are given by a mean field coupling term that aggregates their individual contributions.
We then let the number of agents tend to infinity and take the average to obtain a mean field limit,
 in which we observe the distribution of the agents as a probability measure.
A typical Mean Field Games system looks like
\begin{equation*}
{\rm (MFG)} \quad
\begin{cases}
u_t + H(x,Du) =\ep \Del u + F(x,m) \qquad &\text{ in } \T^n \times (0,T),\\
-m_t - \text{div}(D_pH(x,Du)m) = \ep \Del m \qquad &\text{ in } \T^n \times (0,T),\\
u(x,0)=u_0(x), m(x,T)=m_T(x) \qquad &\text{ on } \T^n.
\end{cases}
\end{equation*}
Here $T>0$, $\ep \geq 0$ are given parameters, and $\T^n=\R^n/\Z^n$ is the $n$-dimensional torus.
The first equation, a Hamilton-Jacobi-Bellman type equation, is forward in time and associated with
an optimal control problem, and the unknown $u=u(x,t)$ is the value (payoff) function of an average agent.
The second equation, a Fokker-Planck equation, is backward in time and the unknown $m=m(x,t)$
describes the density (distribution) of the agents.
For each fixed $t\in [0,T]$, $m(\cdot,t)$ is a probability measure.
In this context, the Hamiltonian $H=H(x,p):\T^n \times \R^n \to \R$ is assumed to be convex in $p$
because of the optimal control framework. The coupling term $F(x,m):\T^n \times \mathcal{P}(\T^n) \to \R$ encodes the interactions between
each agent and the mean field. Here $\mathcal{P}(\T^n)$ is the set of all Radon probability measures on $\T^n$.

In the lecture notes of Cardaliaguet \cite{Car1}, and Gomes, Pimentel, Voskanyan \cite{GPV}, 
the time direction in (MFG) is reversed. To go from this setting to theirs, we simply set $\ol{u}(x,t)=u(x,T-t)$,
$\ol{m}(x,t)=m(x,T-t)$ for all $(x,t) \in \T^n \times [0,T]$. Then $(\ol{u},\ol{m})$ satisfies
\begin{equation}\label{MFG'}
\begin{cases}
-\ol{u}_t + H(x,D\ol{u}) =\ep \Del \ol{u} + F(x,\ol{m}) \qquad &\text{ in } \T^n \times (0,T),\\
\ol{m}_t - \text{div}(D_pH(x,D\ol{u})\ol{m}) = \ep \Del \ol{m} \qquad &\text{ in } \T^n \times (0,T),\\
\ol{u}(x,T)=u_0(x), \ol{m}(x,0)=m_T(x) \qquad &\text{ on } \T^n.
\end{cases}
\end{equation}

A quick and heuristic way in \cite{Car1,GPV} to derive \eqref{MFG'} is the following.
An average agent controls a stochastic differential equation
\[
dX_t = \al_t\,dt +\sqrt{2\ep}dB_t
\]
where $B_t$ is a standard Brownian motion. He/she aims at minimizing the value functional
\[
\E \left[ \int_0^T (L(X_s,\al_s)+F(X_s,\ol{m}(s))\,ds + u_0(X_T) \right].
\]
Here the Lagrangian $L=L(x,q):\T^n \times \R^n \to \R$ is the Legendre transform of $H$.
It is important noting that $F$ plays a role in this minimizing problem.

The value functional of an average agent is then given by the first equation in \eqref{MFG'}.
Heuristically, his/her optimal control is given in a feedback form by $\al^*(x,t) = -D_pH(x,D\ol{u})$.
As all agents are rational, they all move with a velocity which is due to both the diffusion
and the drift term $-D_pH(x,D\ol{u})$, which leads to the second equation in \eqref{MFG'}.

We also refer the readers to the surveys of Gu\'eant, Lasry, Lions \cite{GLL}
and Gomes, Sa\'ude \cite{GS} for further discussions on (MFG) and applications.

\subsection{Nonconvex Mean Field Games}
We give a heuristic derivation here in case $\ep=0$.
For now, let us just assume that $F(x,m):\T^n \times \mathcal{P}(\T^n) \to \R$ is nice enough.

We consider a large number of identical pairs of players who are all rational, 
and each pair plays an identical zero-sum differential game.
In each pair, player I aims at maximizing while player II aims at minimizing a certain payoff functional 
by controlling the dynamics of a particle in $\T^n$, which represents the location of the pair in the game.

Fix $T>0$. Let $A,B$ be two compact metric spaces. 
For $t \in [0,T)$, let
\begin{align*}
&\cA_t:=\left\{a:[t,T]\to A\,:\, a \text{ is measurable}\right\},\\
&\cB_t:=\left\{b:[t,T]\to B\,:\, b \text{ is measurable}\right\},
\end{align*}
be the set of possible controls in time $[t,T]$ of players I and II, respectively.
We henceforth identify any two controls which agree a.e.

Assume that the dynamics is given by an ordinary differential equation
\[
{\rm(ODE)} \quad
\begin{cases}
y_x'(s) = f(y_x(s), a(s),b(s)) \quad \text{ for } s\in (t,T),\\
y_x(t) =x \in \T^n,
\end{cases}
\]
for given controls $a(\cdot) \in \cA_t$ of player I,
and $b(\cdot) \in \cB_t$ of player II.
Here, $f:\T^n \times A\times B \to \T^n$ is a given vector field satisfying: there exists $C>0$ such that
\[
\begin{cases}
f \in C(\T^n \times A \times B),\\
|f(y_1,a,b)-f(y_2,a,b)| \leq C|y_1-y_2| \quad \text{ for all } y_1, y_2 \in \T^n, a\in A, b\in B.
\end{cases}
\]
Under the conditions on $f$, (ODE) has a unique solution.
Associated with (ODE) is the payoff functional
\[
C_{x,t}(a(\cdot),b(\cdot))=\int_t^T (h(y_x(s),a(s),b(s))+F(y_x(s),\ol{m}(s)))\,ds + u_0(x(T)),
\]
where $h:\T^n \times A \times B \to \R$ is a given function satisfying: there exists $C>0$ so that
\[
\begin{cases}
h \in C(\T^n \times A \times B),\\
|h(y_1,a,b)-h(y_2,a,b)| \leq C|y_1-y_2| \quad \text{ for all } y_1, y_2 \in \T^n, a\in A, b\in B.
\end{cases}
\]
The interpretation is that $h$ is the running payoff and $u_0$ is the terminal payoff.
For this generic pair of players, at time $s$, 
their only knowledge of the whole world is the distribution of other agent represented by the density $\ol{m}(s)$.
At location $y_x(s)$ and with the knowledge of the density $\ol{m}(s)$, player I gains a further payoff value $F(y_x(s),\ol{m}(s))$.
Of course, the goal of player I is to maximize the payoff functional $C_{x,t}(a(\cdot),b(\cdot))$.
On the other hand, player II wants to minimize it (or to maximize $-C_{x,t}(a(\cdot),b(\cdot))$).
One way to interpret this situation is that generic player I prefers to be close to other pairs to gain more value,
while generic player II prefer to avoid the crowds.

The set of strategies for player I beginning at time $t$ is
\[
\Sig_t:=\left\{\al:\cB_t \to \cA_t \text{ non-anticipating} \right\},
\]
where non-anticipating means that, for all $b_1(\cdot), b_2(\cdot) \in \cB_t$ and $s \in [t,T]$,
\[
b_1(\cdot)=b_2(\cdot) \text{ on } [t,s) \Rightarrow \al[b_1](\cdot)=\al[b_2](\cdot) \text{ on } [t,s).
\]
Similarly, the set of strategies for player II beginning at time $t$ is
\[
\Gam_t:=\left\{\beta:\cA_t \to \cB_t \text{ non-anticipating}\right\}.
\]
We call
\begin{align*}
&V(x,t):=\inf_{\beta \in \Gam_t} \sup_{a(\cdot) \in \cA_t} C_{x,t}(a(\cdot),\beta[a](\cdot)),\\
&U(x,t):=\sup_{\al \in \Sig_t} \inf_{b(\cdot) \in \cB_t} C_{x,t}(\al[b](\cdot),b(\cdot)),
\end{align*}
the lower value and the upper values of the game, respectively.

Let
\begin{align*}
&H^-(x,t,p):=\min_{a \in A} \max_{b \in B} \left\{ -f(x,a,b)\cdot p -h(x,a,b)-F(x,\ol{m}(t))\right\},\\
&H^+(x,t,p):=\max_{b \in B} \min_{a \in A} \left\{ -f(x,a,b)\cdot p  -h(x,a,b)-F(x,\ol{m}(t))\right\},
\end{align*}
be the lower and upper Hamiltonians of the game, respectively.
It was shown by Evans, Souganidis \cite{EvS} that, $V$ is the viscosity solution to the lower Hamilton-Jacobi-Isaacs equation
\[
\begin{cases}
-V_t + H^-(x,t,DV) = 0 \qquad &\text{ in } \T^n \times (0,T),\\
V(x,T) = u_0(x) \qquad &\text{ on } \T^n,
\end{cases}
\]
and $U$ is the viscosity solution to the upper Hamilton-Jacobi-Isaacs equation
\[
\begin{cases}
-U_t + H^+(x,t,DU) = 0 \qquad &\text{ in } \T^n \times (0,T),\\
U(x,T) = u_0(x) \qquad &\text{ on } \T^n.
\end{cases}
\]
Since $H^-\geq H^+$, we get $V \leq U$ by using the comparison principle.
Assume further that the zero-sum differential game has a value, that is, $H^-=H^+$.
This means we assume that 
\begin{equation}\label{game-value}
H(x,p)=\min_{a\in A} \max_{b\in B}\left\{ -f(x,a,b)\cdot p -h(x,a,b)\right\}=\max_{b \in B} \min_{a \in A} \left\{ -f(x,a,b)\cdot p  -h(x,a,b)\right\}.
\end{equation}
Once \eqref{game-value} holds, then we have
\[
H^-(x,t,p)=H^+(x,t,p)= H(x,p)-F(x,\ol{m}(t)).
\]
Thus, $U=V$ solves the first equation in \eqref{MFG'}.
Heuristically, for $(x,t)\in \T^n \times (0,T)$,
the optimal strategies of the pair is given by $(a^*,b^*)$ such that, 
for $Y_x(s)=y_x(s,a^*(s),b^*(s))$, we have $Y_x'(s) = -D_pH(Y_x(s), D\ol{u}(Y_x(s))$ (see Cardaliaguet \cite{Car0}).
As all players are rational, all pairs move with a velocity due to the drift term $-D_pH(x,D\ol{u})$,
which gives us the second equation in \eqref{MFG'}.
We thus obtain \eqref{MFG'}, hence (MFG), with $H$ not convex in $p$.

\subsection{A simple system - A case study}
Our main focus in this paper is the following system,
which is a simplified version of the full system (MFG),
\begin{equation}\label{MFG}
\begin{cases}
u_t + H(Du) = \Del u + \rho*(\rho*m) \qquad &\text{ in } \T^n \times (0,T),\\
-m_t - \text{div}(DH(Du) m) = \Del m \qquad &\text{ in } \T^n \times (0,T),\\
u(x,0) = u_0(x), m(x,T) = m_T(x) \qquad &\text{ on } \T^n.
\end{cases}
\end{equation}
Here, the coupling term $F(x,m)=\rho*(\rho*m)$ is very simple and is of nonlocal type.

We assume the following conditions
\begin{itemize}
\item[(A1)] The Hamiltonian $H:\R^n \to \R$ is smooth and there exists $c_0>0$ such that
\[
|DH(p)|+|D^2 H(p)| \leq c_0 \qquad \text{ for all } p \in \R^n.
\]

\item[(A2)] The convolution kernel $\rho \in C_c^\infty(\T^n, [0,\infty))$ 
satisfying that $\rho$ is symmetric, that is, $\rho(x) = \rho(-x)$ for all $x\in \T^n$, and $\int_{\T^n} \rho\,dx=1$.

\item[(A3)] $u_0 \in C^2(\T^n)$ and $m_T \in C(\T^n,[0,\infty))$ with $\int_{\T^n} m_T\,dx=1$.
\end{itemize}

\subsection*{Organization of the paper}
Our main goal here is to study existence and uniqueness of solutions to \eqref{MFG}.
In Section \ref{sec:exist}, we prove that there exist solutions to \eqref{MFG}.
In Section \ref{sec:unique}, we show that, under some additional conditions, we have uniqueness results for \eqref{MFG}.

\subsection*{Notations} We use the following notations
\begin{align*}
&C^2_1(\T^n \times [0,T]) =\left\{ u: \T^n \times [0,T] \to \R\,:\, u, Du, D^2u, u_t \in C(\T^n \times [0,T]) \right\},\\
&C([0,T], L^2(\T^n))=\left\{ \mathbf{v}:[0,T] \to L^2(\T^n)\,:\, \mathbf{v} \text{ is continuous}\right\}.
\end{align*}
%%%%%%%%%%%%%%%%%%%%%%

\section{Existence of solutions} \label{sec:exist}
Let $M = \max_{\T^n} m_T$. Set
\[
X=\left\{ m\in C([0,T], L^2(\T^n))\,:\, 0 \leq m \leq M \ \text{ a.e. on } \T^n \times [0,T]\right\}.
\]
The main result in this section is 
\begin{thm}\label{thm:exist}
Assume that {\rm (A1)--(A3)} hold.
Then \eqref{MFG} has a pair of solution $(u,m)\in C^2_1(\T^n \times [0,T]) \times C([0,T],L^2(\T^n))$.
\end{thm}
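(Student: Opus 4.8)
The plan is to build a solution by a Schauder fixed-point argument on the set $X$. Given $m\in X$, I would first solve the forward problem
\[
u_t+H(Du)=\Del u+\rho*(\rho*m)\ \text{ in }\T^n\times(0,T),\qquad u(\cdot,0)=u_0,
\]
to produce $u$, and then, freezing this $u$, solve the backward problem
\[
-m_t-\Div(DH(Du)\,m)=\Del m\ \text{ in }\T^n\times(0,T),\qquad m(\cdot,T)=m_T,
\]
to produce a new density $\tilde m$. This defines a map $\Phi\colon X\to C([0,T],L^2(\T^n))$, $\Phi(m)=\tilde m$, whose fixed points are precisely the solutions of \eqref{MFG}. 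I would then verify the hypotheses of Schauder's theorem: $X$ is convex, closed and bounded in $C([0,T],L^2(\T^n))$, and it remains to show that $\Phi$ is continuous, that $\Phi(X)$ is relatively compact, and that $\Phi(X)\subseteq X$.

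For the HJB solve, the key observation is that the coupling $g:=\rho*(\rho*m)$ is harmless: since every spatial derivative can be moved onto $\rho$, one has $\|D^k g\|_\infty\le \|D^k\rho\|_\infty\,\|m\|_{L^1}\le \|D^k\rho\|_\infty\,M$ for all $k$, uniformly in $m\in X$, and $t\mapsto g(\cdot,t)$ is continuous. Thus the equation for $u$ is uniformly parabolic with a globally Lipschitz nonlinearity $H(Du)$ by (A1) and a smooth, uniformly bounded right-hand side; standard parabolic theory yields a unique $u\in C^2_1(\T^n\times[0,T])$ together with bounds on $\|u\|_\infty,\|Du\|_\infty,\|D^2u\|_\infty$ that are uniform over $m\in X$. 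For the FP solve, the equation is linear in $m$; in nondivergence form it reads $-\tilde m_t-DH(Du)\cdot D\tilde m-\tr\!\big(D^2H(Du)D^2u\big)\tilde m=\Del\tilde m$, a backward uniformly parabolic equation with bounded coefficients (the drift is bounded by $c_0$) and nonnegative terminal data, so it has a unique solution $\tilde m$ and the maximum principle gives $\tilde m\ge 0$.

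Continuity and compactness should then be routine. If $m_n\to m$ in $C([0,T],L^2)$ then $g_n\to g$ uniformly, and by stability of the HJB (using the uniform a priori bounds) $u_n\to u$ in $C^2_1$; the drifts $DH(Du_n)$ converge, and since the FP equation is linear the solutions $\tilde m_n\to\tilde m$, so $\Phi$ is continuous. Uniform energy estimates for the linear FP equation bound $\tilde m$ in $L^2(0,T;H^1)$ with $\partial_t\tilde m$ bounded in $L^2(0,T;H^{-1})$, and, the coefficients being smooth in $x$, parabolic regularity places $\Phi(X)$ in a space compactly embedded (Aubin–Lions) in $C([0,T],L^2(\T^n))$; hence $\Phi(X)$ is relatively compact. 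Granting $\Phi(X)\subseteq X$, Schauder's theorem provides $m\in X$ with $\Phi(m)=m$, and $(u,m)$ solves \eqref{MFG}.

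The main obstacle is precisely the invariance $\Phi(X)\subseteq X$, i.e.\ the upper bound $\tilde m\le M$. Nonnegativity is immediate, but the bound from above is delicate: the divergence form of the drift contributes the zeroth-order term $\tr(D^2H(Du)D^2u)$, which has no definite sign, so the constant $M$ need not be a supersolution and a bare maximum principle only gives $\|\tilde m(\cdot,t)\|_\infty\le e^{C(T-t)}\|m_T\|_\infty$ with $C=\|\tr(D^2H(Du)D^2u)\|_\infty$. Securing the bound at the level $M$ is where the uniform Hessian estimate on $u$ from the HJB solve and the conservative (mass-preserving) structure of the Fokker–Planck equation must be exploited carefully; this step is the crux on which the whole fixed-point scheme rests.
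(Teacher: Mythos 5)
Your architecture coincides with the paper's: the same set $X$, the same two-step map $\Phi(m)=\widetilde m$, a priori bounds on $Du$ and $D^2u$ uniform over $m\in X$ (the paper obtains these via the nonlinear adjoint method, Proposition \ref{prop:apriori}), and Schauder's theorem. In the parts you do carry out, the only substantive divergence is technical: for continuity the paper tests the equation satisfied by $v_k=\Phi(m_k)-\Phi(m)$ against $v_k$ and applies Gronwall to get $\|v_k(\cdot,t)\|_{L^2}^2\le C\|D(U_k-U)\|_{L^2(\T^n\times[0,T])}^2$, and it gets compactness by extracting, via Arzel\`a--Ascoli, a subsequence along which $DU_{k_j}$ converges uniformly and then reusing the same Gronwall estimate to show $\{\Phi(m_{k_j})\}$ is Cauchy in $C([0,T],L^2)$. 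Your Aubin--Lions route is workable but needs some care near $t=T$, where $m_T$ is only continuous, not $H^1$; the paper's Cauchy-sequence argument sidesteps this.

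The genuine gap is the one you name yourself: you never establish $\Phi(X)\subseteq X$, and without it Schauder does not apply. Your diagnosis of the difficulty is correct: in nondivergence form the Fokker--Planck equation carries the zeroth-order coefficient $\Div(DH(DU))=\tr(D^2H(DU)D^2U)$, which has no sign, so the constant $M$ need not be a supersolution and the maximum principle only yields $\|\widetilde m(\cdot,t)\|_{L^\infty}\le e^{C(T-t)}M$. (The paper dispatches this step with the single sentence ``In light of the maximum principle, $\widetilde m\in X$,'' which is terse for exactly the reason you give.) The repair is to change the ceiling in $X$ rather than to improve the bound: since $\|D^k(\rho*(\rho*m))\|_{L^\infty}\le\|D^k\rho\|_{L^\infty}\|m(\cdot,t)\|_{L^1}$, the estimates of Proposition \ref{prop:apriori}, and hence a bound $C_*$ on $\|\Div(DH(DU))\|_{L^\infty}$, are uniform over all $m\ge0$ with $\int_{\T^n}m(x,t)\,dx\le 1$, a constraint preserved by the Fokker--Planck flow because mass is conserved and nonnegativity propagates. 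Replacing $M$ by $M'=e^{C_*T}\max_{\T^n}m_T$ in the definition of $X$ (and adding the constraints $m\ge0$ and $\int_{\T^n}m(x,t)\,dx= 1$) makes $X$ invariant under $\Phi$, and nothing else in the argument uses the particular value of the ceiling. So the gap is fillable, but as written your proof is incomplete precisely at what you correctly identify as its crux.
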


\begin{proof}

For each $m \in X$, there exists a unique solution, $U\in C^2_1(\T^n \times [0,T])$, of
\begin{equation}\label{HJB}
\begin{cases}
U_t + H(DU) = \Del U+ \rho*(\rho*m) \qquad &\text{ in } \T^n \times (0,T),\\
U(x,0) = u_0(x) \qquad &\text{ on } \T^n.
\end{cases}
\end{equation}
Thanks to Proposition \ref{prop:apriori} below, we have that 
\[
\|DU\|_{L^\infty(\T^n \times [0,T])} + \|D^2 U\|_{L^\infty(\T^n \times [0,T])} \leq C,
\]
where $C$ depends only on $c_0, M, T, \|u_0\|_{C^2(\T^n)}, \|\rho\|_{C^2(\T^n)}$ and not on $m$.
This also implies that $\|U_t\|_{L^\infty(\T^n \times [0,T])} \leq C$.
Let $\widetilde m$ be the solution to the Fokker-Planck equation
\begin{equation}\label{FP}
\begin{cases}
-\widetilde m_t - \text{div}(DH(DU)\widetilde m) = \Del \widetilde m  \qquad &\text{ in } \T^n \times (0,T),\\
\widetilde m(x,T) = m_T(x) \qquad &\text{ on } \T^n.
\end{cases}
\end{equation}
In light of the maximum principle, $\widetilde m \in X$.

Define the map $\Phi\,:\, X \to X$ as $\Phi(m)=\widetilde m$.

\smallskip

\noindent {\bf Claim 1.} The map $\Phi$ is continuous.

Let $m_k \to m$ in $X$. Then $\rho*(\rho*m_k) \to \rho*(\rho*m)$ uniformly on $\T^n \times [0,T]$.
By stability of viscosity solutions and \eqref{apriori}, we get that $U_k \to U$ uniformly on $\T^n \times [0,T]$,
where $U$ is the solution of \eqref{HJB}.
The a priori estimate \eqref{apriori} yields further that $DU_k(\cdot,t) \to DU(\cdot,t)$ uniformly  on $\T^n$ for each $t\in [0,T]$.

Let $v_k=\Phi(m_k) - \Phi(m)$. Then $v_k$ satisfies
\[
\begin{cases}
-(v_k)_t - \text{div}(DH(U_k) v_k) - \text{div}((DH(DU_k)-DH(DU))\widetilde m)= \Del v_k  \qquad &\text{ in } \T^n \times (0,T),\\
v_k(x,T) = 0 \qquad &\text{ on } \T^n.
\end{cases}
\]
Multiply this PDE by $v_k$ and integrate on $\T^n$ to get
\begin{align*}
&\frac{d}{dt} \int_{\T^n} \frac{-|v_k(x,t)|^2}{2}\,dx\\
=\ & \int_{\T^n} \left(-|Dv_k|^2 - v_k DH(DU_k)\cdot Dv_k - \widetilde m (DH(DU_k)-DH(DU))\cdot Dv_k \right)\,dx\\
\leq \ & C \int_{\T^n} \left(|v_k|^2 + |D(U_k-U)|^2 \right)\,dx.
\end{align*}
We employ Gronwall's inequality to yield further that, for $t \in [0,T]$,
\[
\|v_k(\cdot,t)\|_{L^2(\T^n)}^2 \leq C \|D(U_k-U)\|_{L^2(\T^n \times [0,T])}^2.
\]
Let $k \to \infty$ to conclude that $\Phi$ is continuous.

\smallskip

\noindent {\bf Claim 2.} The set $K = \ol{\Phi(X)}$ is compact.

Fix a sequence $\{m_k\} \subset X$. As $U_k$ satisfies estimate \eqref{apriori} for all $k \in \N$,
we also have that $\|(U_k)_t\|_{L^\infty(\T^n \times [0,T])} \leq C$ for some constant $C>0$ independent of $k$.
We use the Arzel\`a--Ascoli theorem to extract a subsequence $\{U_{k_j}\}$ of $\{U_k\}$ such that
\[
U_{k_j} \to U \quad \text{ uniformly on } \T^n \times [0,T],
\]
for some $U \in \Lip(\T^n \times [0,T])$.
The estimate \eqref{apriori} gives further that, for each $t\in [0,T]$,
\[
DU_{k_j}(\cdot,t) \to DU(\cdot,t)  \quad \text{ uniformly on } \T^n.
\]
Repeat the argument in Claim 1 to deduce that $\{\Phi(m_{k_j})\}$ is a Cauchy sequence in $X$.
Therefore, $K$ is compact.

We use Claims 1,2 and Schauder's fixed point theorem to conclude that, there exists $m\in K$ such that
$\Phi(m)=m$.
\end{proof}

\begin{prop} \label{prop:apriori}
Assume that {\rm (A1)--(A3)} hold.
Let $U$ be the solution to \eqref{HJB} with $m \in X$ given.
There exists $C>0$ depends only on $c_0, M, T, \|u_0\|_{C^2(\T^n)}, \|\rho\|_{C^2(\T^n)}$ such that
\begin{equation}\label{apriori}
\|DU\|_{L^\infty(\T^n \times [0,T])} + \|D^2 U\|_{L^\infty(\T^n \times [0,T])} \leq C.
\end{equation}
\end{prop}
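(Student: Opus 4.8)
The plan is to prove the two bounds separately: first the gradient bound by the Bernstein method, and then the Hessian bound by linear parabolic regularity. A preliminary reduction records the only features of $m$ that we use. Writing $f=\rho*(\rho*m)$ and using $0\le m\le M$, $\rho\ge 0$ and $\int_{\T^n}\rho\,dx=1$, Young's inequality gives
\[
\|f\|_{\Li}\le M,\qquad \|Df\|_{\Li}\le \|D\rho\|_{L^1}M,\qquad \|D^2f\|_{\Li}\le \|D^2\rho\|_{L^1}M,
\]
so $\|f(\cdot,t)\|_{C^2(\T^n)}\le C(M,\|\rho\|_{C^2})$ uniformly in $t\in[0,T]$.

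For the gradient estimate I would differentiate \eqref{HJB} in $x_k$, which shows that each $U_{x_k}$ solves a linear parabolic equation with drift $DH(DU)$. Setting $w=|DU|^2$ and computing yields
\[
w_t-\Del w+DH(DU)\cdot Dw=2\,DU\cdot Df-2|D^2U|^2\le w+\|Df\|_{\Li}^2 .
\]
The zeroth-order term has the unfavorable sign but only linearly, and $-2|D^2U|^2\le 0$ may be discarded. Tracking $g(t)=\max_{\T^n}w(\cdot,t)$ and using $Dw=0$, $\Del w\le 0$ at a spatial maximum, one gets $g'(t)\le g(t)+\|Df\|_{\Li}^2$ a.e.; Gronwall's inequality together with $w(\cdot,0)=|Du_0|^2$ then gives $\|DU\|_{\Li(\T^n\times[0,T])}\le C$ depending only on $c_0,M,T,\|u_0\|_{C^1},\|\rho\|_{C^1}$. (The computation uses third derivatives of $U$, which is legitimate since solutions of the uniformly parabolic smooth equation \eqref{HJB} are smooth for $t>0$, or after a routine regularization.)

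The Hessian bound is the crux, and here I would deliberately avoid the Bernstein method. Repeating the above computation for $z=|D^2U|^2$ produces the term $-2\langle (D^2U)^3,\,D^2H(DU)\rangle$, arising from the dependence of the drift $DH(DU)$ on $DU$; this term is cubic in $D^2U$ and, in the absence of any convexity of $H$ (the whole point of the nonconvex setting), has no favorable sign and cannot be absorbed by the good term $-2|D^3U|^2$. So this is exactly where a direct Bernstein estimate breaks down, and it is the main obstacle. Instead I would bootstrap through linear theory. Having bounded $DU$, each $v=U_{x_k}$ solves the nondivergence equation $v_t-\Del v+DH(DU)\cdot Dv=f_{x_k}$ with bounded measurable drift and bounded source, so the Krylov--Safonov (equivalently $L^p$) parabolic estimate gives a uniform bound on $\|DU\|_{C^{\alpha,\alpha/2}}$ for some $\alpha\in(0,1)$. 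Then $\eta:=f-H(DU)\in C^{\alpha,\alpha/2}$ with a uniform bound, since $H$ is Lipschitz and $f$ is smooth in $x$, and parabolic Schauder estimates applied to $U_t-\Del U=\eta$ upgrade this to $\|D^2U\|_{\Li}\le C$, with all constants depending only on $c_0,M,T,\|u_0\|_{C^2},\|\rho\|_{C^2}$.

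The remaining technical point is obtaining these bounds up to the initial time $t=0$, and this is where the hypothesis $u_0\in C^2(\T^n)$ enters: it supplies compatible initial data so that the interior-in-time regularity estimates can be complemented near $t=0$ by a barrier/comparison argument (comparing $U$ with $u_0\pm Ct$ and its spatial translates) that keeps the constants uniform on the closed strip $\T^n\times[0,T]$. Finally, once $\|DU\|_{\Li}$ and $\|D^2U\|_{\Li}$ are controlled, the bound on $\|U_t\|_{\Li}$ follows immediately from the equation $U_t=\Del U+f-H(DU)$.
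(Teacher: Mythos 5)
Your proof is correct in its overall architecture but takes a genuinely different route from the paper. The paper runs the entire argument through the nonlinear adjoint method: it introduces the adjoint (Fokker--Planck-type) equation with terminal data $\del_{x_0}$, obtains the gradient bound from a representation formula against the adjoint measure $\sig$, derives the integrated identity $\int_0^{t_0}\int_{\T^n}|D^2U|^2\sig\,dxdt\le C$ from the equation satisfied by $|DU|^2/2$, and then uses that integrated bound to absorb the signless quadratic term $H_{p_kp_l}U_{x_kx_i}U_{x_lx_j}$ appearing in the twice-differentiated equation -- precisely the term you correctly identify as the obstruction to a direct Bernstein estimate on $|D^2U|^2$. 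You instead get the gradient bound by Bernstein plus the maximum principle (equivalent in substance to the paper's first step) and then bypass the problematic quadratic term entirely by bootstrapping through linear theory: Krylov--Safonov for $U_{x_k}$ gives a uniform $C^{\al,\al/2}$ bound on $DU$, and Schauder applied to $U_t-\Del U=f-H(DU)$ gives the Hessian bound. Both routes are valid; the paper's is more self-contained (only integration by parts and positivity of $\sig$), while yours invokes heavier but standard machinery and arguably isolates more cleanly why nonconvexity is harmless once a diffusion is present. The one point you should tighten is the Hessian bound up to $t=0$: the barrier $u_0\pm Ct$ and its translates control $U_t$ and $DU$ near the initial time but not $D^2U$, and Schauder estimates up to $t=0$ would nominally want $u_0\in C^{2+\al}$. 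The standard fix is to split $U=V+W$ with $V$ the caloric extension of $u_0$ (so $\|D^2V\|_{L^\infty}\le\|D^2u_0\|_{L^\infty}$ by convolution with the heat kernel) and $W$ the solution with zero initial data and right-hand side $f-H(DU)$, to which global Schauder applies without further compatibility; with that substitution your argument closes with constants depending only on the stated quantities.
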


\begin{proof}
Let $f=\rho*(\rho*m)$. It is straightforward to see that 
\[
\|Df\|_{L^\infty(\T^n \times [0,T])} \leq M \|D\rho\|_{L^\infty(\T^n)}
\quad \text{and} \quad 
\|D^2f\|_{L^\infty(\T^n \times [0,T])} \leq M \|D^2\rho\|_{L^\infty(\T^n)}.
\]
Fix $(x_0,t_0) \in \T^n \times (0,T]$. We use the nonlinear adjoint method
to prove \eqref{apriori}. See Evans \cite{Ev}, Tran \cite{Tr}, Cagnetti, Gomes, Mitake, Tran \cite{CGMT}, Gomes, Pimentel, Voskanyan \cite{GPV},
Mitake, Tran \cite{MT-LN} and the references therein 
for the development of this method.

Consider the adjoint equation to the linearized operator of \eqref{HJB}:
\begin{equation}\label{adj}
\begin{cases}
-\sig_t  -\text{div}(DH(DU)\sig) = \Del \sig \qquad &\text{ in } \T^n \times (0,t_0),\\
\sig(x,t_0) = \del_{x_0} \qquad &\text{ on } \T^n.
\end{cases}
\end{equation}
It is clear that $\sig>0$ in $\T^n \times (0,t_0)$ and $\int_{\T^n} \sig(x,t)\,dx=1$ for all $t\in [0,t_0]$.
Differentiate \eqref{HJB} with respect to $x_i$, multiply by $\sig$ and integrate to yield that
\[
U_{x_i}(x_0,t_0) = \int_0^{t_0} \int_{\T^n} f_{x_i} \sig\,dxdt + \int_{\T^n} (u_0)_{x_i} \sig \,dx.
\]
Hence,
\begin{equation}\label{bdd-1}
|U_{x_i}(x_0,t_0)| \leq t_0 M \|D\rho\|_{L^\infty(\T^n)} + \|Du_0\|_{L^\infty} \leq TM \|D\rho\|_{L^\infty(\T^n)} + \|Du_0\|_{L^\infty}.
\end{equation}

Let $\phi=\frac{|DU|^2}{2}$. 
Differentiate \eqref{HJB} with respect to $x_i$, multiply by $U_{x_i}$ and sum over $i$ to get
\[
\phi_t + DH(DU)\cdot D\phi - Df\cdot DU=\Del \phi - |D^2U|^2.
\]
Multiply the above by $\sig$ and integrate to imply
\begin{equation}\label{bdd-2}
\int_0^{t_0} \int_{\T^n} |D^2U|^2 \sig\,dxdt = \int_0^{t_0} \int_{\T^n} (Df\cdot DU)\sig\,dxdt + \int_{\T^n} \phi(x,0)\sig\,dx - \phi(x_0,t_0) \leq C.
\end{equation}

Next, we differentiate \eqref{HJB} with respect to $x_i$ then $x_j$,
\[
(U_{x_i x_j})_t + DH(DU)\cdot DU_{x_i x_j} + H_{p_k p_l} U_{x_k x_i} U_{x_l x_j} = \Del U_{x_i x_j} + f_{x_i x_j}.
\]
Multiply this identity by $\sig$, integrate and use (A1), (A3), \eqref{bdd-2} to conclude that
\begin{equation}\label{bdd-3}
|U_{x_i x_j}(x_0,t_0)| \leq \int_0^{t_0} \int_{\T^n} (c_0 |D^2 U|^2 + |f_{x_i x_j}|) \sig\,dxdt +\int_{\T^n} |(U_0)_{x_i x_j}| \sig\,dx \leq C.
\end{equation}
\end{proof}

\begin{rem}
The arguments in the proof of the existence result (Theorem \ref{thm:exist}) are quite standard and not new.
A similar form of the proof already appeared in \cite[Section 10.2]{GPV}.
In \cite{GPV}, Gomes, Pimentel and Voskanyan used
the convexity of $H$ to achieve the uniform semiconcavity estimate
of $U_k$, which was then used  to get Claim 2 (the compactness of $K=\ol{\Phi(X)}$).
The main difference here is that we do not require convexity of $H$,
and estimate \eqref{apriori} is obtained thanks to the appearance of the diffusion term and the nonlinear adjoint method.
\end{rem}

%%%%%%%%%%%%%%%%%%%%

\section{Uniqueness of solutions} \label{sec:unique}
We obtain uniqueness of solutions to \eqref{MFG} in this section.
As $H$ is not necessarily convex, it is much harder to perform this task.
We add the following assumption
\begin{itemize}
\item[(A4)] The constant $c_0$, which appears in {\rm (A1)}, satisfies
\[
c_0 < \frac{1}{12M},
\]
where $M = \max_{\T^n} m_T$.
\end{itemize}
Note first that $M \geq 1$ as $m_T \geq 0$ and $\int_{\T^n} m_T(x)\,dx=1$.
Hence,
\begin{equation}\label{eq:c0}
c_0 < \min\left\{\frac{1}{4(M+2)},  \frac{1}{2\sqrt{5}}\right\}.
\end{equation}
Assumption (A4) is like a smallness condition, which is quite restrictive but nevertheless quantitative. 
Note further that  the smallness of $c_0$ does not depend on $T$, and thus, there is no restriction on $T>0$.
See Ambrose \cite{Am} for related results.

Here is one of the main results in this section.
\begin{thm} \label{thm:unique}
Assume that {\rm (A1)--(A4)} hold.
Then \eqref{MFG} has at most one pair of solution  $(u,m)\in C^2_1(\T^n \times [0,T]) \times C([0,T],L^2(\T^n))$.
\end{thm}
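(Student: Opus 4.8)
The plan is to argue by contradiction, or equivalently to estimate the difference of two solutions and show it must vanish. Suppose $(u_1,m_1)$ and $(u_2,m_2)$ are two solutions of \eqref{MFG}. Set $w = u_1 - u_2$ and $n = m_1 - m_2$. Subtracting the two Hamilton--Jacobi--Bellman equations and the two Fokker--Planck equations, $w$ and $n$ satisfy a coupled linear-ish system:
\begin{align*}
w_t + \int_0^1 DH(s\,Du_1 + (1-s)Du_2)\,ds \cdot Dw &= \Del w + \rho*(\rho*n),\\
-n_t - \Div\!\left(DH(Du_1)\,m_1 - DH(Du_2)\,m_2\right) &= \Del n,
\end{align*}
with homogeneous data $w(\cdot,0) = 0$ and $n(\cdot,T) = 0$. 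The idea is to test the first equation against $n$ and the second against $w$, integrate over $\T^n$, and add, so that the cross terms involving the coupling and the drift partially cancel, leaving an integral identity that can be controlled by the smallness constant $c_0$.

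First I would compute $\frac{d}{dt}\int_{\T^n} w\,n\,dx$ using both PDEs. The diffusion terms combine into $-2\int |Dw|^2$-type contributions after integration by parts, the coupling term produces $\int (\rho*(\rho*n))\,n\,dx = \|\rho*n\|_{L^2}^2 \geq 0$ (using symmetry of $\rho$ from (A2)), and the drift terms produce expressions of the form $\int (DH(Du_1)m_1 - DH(Du_2)m_2)\cdot Dw\,dx$. The key algebraic step is to split the difference $DH(Du_1)m_1 - DH(Du_2)m_2$ as $(DH(Du_1)-DH(Du_2))m_1 + DH(Du_2)\,n$ and to bound $|DH(Du_1)-DH(Du_2)| \leq c_0|Dw|$ via (A1). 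Combined with the bounds $0 \leq m_i \leq M$ from membership in $X$, Cauchy--Schwarz and Young's inequality then turn every bad term into a constant (a multiple of $c_0$, $M$) times $\int |Dw|^2 + \int |n|^2$ or $\int |\rho*n|^2$, and the explicit numerical constants in (A4)/\eqref{eq:c0} (the factors like $\frac{1}{12M}$, $\frac{1}{4(M+2)}$, $\frac{1}{2\sqrt5}$) are exactly what is needed to absorb these into the good terms and force the energy to be monotone and to vanish.

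The main obstacle is the bookkeeping of signs and the choice of the right multiplier/test functions so that the mixed boundary conditions (forward data for $w$, backward data for $n$) interact correctly: because $w$ starts at $0$ at $t=0$ while $n$ vanishes at $t=T$, the boundary terms from $\int_0^T \frac{d}{dt}\int w\,n$ should collapse to zero, leaving a pure spatial-gradient energy inequality. The delicate part is verifying that after all Young's-inequality splittings the surviving coefficient in front of $\int|Dw|^2 + \int|n|^2$ is strictly negative precisely under \eqref{eq:c0}; this is where the specific constant $\frac{1}{12M}$ is calibrated. Once the combined energy $E(t) = \int_{\T^n}(\cdots)\,dx$ is shown to satisfy a differential inequality forcing $E \equiv 0$, I conclude $Dw \equiv 0$ and $n \equiv 0$, whence $m_1 = m_2$; then $w$ solves a homogeneous HJB equation with zero initial data and zero coupling, so $w \equiv 0$ and $u_1 = u_2$.
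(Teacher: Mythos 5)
Your overall strategy coincides with the paper's: the Lasry--Lions duality quantity $\int_{\T^n} w\,n\,dx$, separate $L^2$ energy identities for $w$ and for $n$, the bound $|DH(Du_1)-DH(Du_2)|\le c_0|Dw|$, and absorption of the bad terms via the smallness in (A4). The duality step you describe is exactly the paper's inequality \eqref{imp-1}, and your observation that the coupling contributes the good term $\|\rho*n\|_{L^2}^2$ (via the symmetry of $\rho$) is correct.

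There is, however, a genuine gap at precisely the step you flag as ``the delicate part'': you never specify the combined functional $E(t)$ nor the mechanism that forces it to vanish, and the obvious choices do not work. The two energies run in opposite time directions: $\int w^2$ is controlled forward from $w(\cdot,0)=0$, while $\int n^2$ is controlled backward from $n(\cdot,T)=0$, so no single-direction Gronwall argument applies to $\int w^2+\int n^2$; moreover, ``the surviving coefficient is strictly negative, hence $E\equiv 0$'' is not a valid inference, since a nonpositive derivative does not force a function to vanish unless its sign is known at both endpoints. Your alternative of integrating the cross term over $[0,T]$ so the boundary terms collapse yields only $\int_0^T\|\rho*n\|_{L^2}^2\,dt\le 2c_0M\int_0^T\|Dw\|_{L^2}^2\,dt$, and closing the loop from there requires a Gronwall bound on $\|w(t)\|_{L^2}^2$ whose constant grows like $Te^{CT}$; that route gives at best a $T$-dependent smallness condition, whereas (A4) is independent of $T$. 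The paper's resolution is the specific Lyapunov functional
$\varphi(t)=\int_{\T^n}\bigl(-wn+\tfrac14 w^2-n^2\bigr)\,dx$,
with \emph{opposite} signs on the two energies, chosen so that $\varphi(0)=-\int_{\T^n} n(x,0)^2\,dx\le 0$ and $\varphi(T)=\tfrac14\int_{\T^n} w(x,T)^2\,dx\ge 0$, and so that (using (A4), Poincar\'e's inequality applied to $n$ via $\int_{\T^n} n\,dx=0$, and Young's convolution inequality $\|\rho*w\|_{L^2}\le\|w\|_{L^2}$) one has $\varphi'\le\tfrac12\varphi$. Then $e^{-t/2}\varphi$ is non-increasing with a nonpositive value at $t=0$ and a nonnegative value at $t=T$, forcing $\varphi\equiv 0$ and hence $u_1=u_2$, $m_1=m_2$. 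Without this (or an equivalent) construction, your sketch does not close.
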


\begin{proof}
Let $(u^1,m^1)$ and $(u^2, m^2)$ be two pairs of solutions in  $C^2_1(\T^n \times [0,T]) \times C([0,T],L^2(\T^n))$ to \eqref{MFG}:
\begin{equation}\label{MFG1}
\begin{cases}
u^1_t + H(Du^1) = \Del u^1 + \rho*(\rho*m^1) \qquad &\text{ in } \T^n \times (0,T),\\
-m^1_t - \text{div}(DH(Du^1) m^1) = \Del m^1 \qquad &\text{ in } \T^n \times (0,T),\\
u^1(x,0) = u_0(x), m^1(x,T) = m_T(x) \qquad &\text{ on } \T^n,
\end{cases}
\end{equation}
and
\begin{equation}\label{MFG2}
\begin{cases}
u^2_t + H(Du^2) = \Del u^2 + \rho*(\rho*m^2) \qquad &\text{ in } \T^n \times (0,T),\\
-m^2_t - \text{div}(DH(Du^2) m^2) = \Del m^2 \qquad &\text{ in } \T^n \times (0,T),\\
u^2(x,0) = u_0(x), m^2(x,T) = m_T(x) \qquad &\text{ on } \T^n.
\end{cases}
\end{equation}

Take the difference of first equations of \eqref{MFG1} and \eqref{MFG2} and use (A1) to get
\[
(u^1-u^2)_t + DH(Du^2)\cdot D(u^1-u^2) - c_0 |D(u^1-u^2)|^2 \leq \Del(u^1-u^2) +\rho*(\rho*(m^1-m^2)).
\]
Multiply this by $m^2$ and integrate on $\T^n$ to yield
\[
\frac{d}{dt} \int_{\T^n} (u^1-u^2) m^2\,dx \leq \int_{\T^n} \rho*(\rho*(m^1-m^2)) m^2\,dx + c_0 \int_{\T^n} |D(u^1-u^2)|^2 m^2\,dx.
\]
A similar computation gives
\[
\frac{d}{dt} \int_{\T^n} (u^2-u^1) m^1\,dx \leq \int_{\T^n} \rho*(\rho*(m^2-m^1)) m^1\,dx + c_0 \int_{\T^n} |D(u^1-u^2)|^2 m^1\,dx.
\]
Combine the two above inequalities and use (A2), (A3) to imply 
\begin{align*}
&\frac{d}{dt} \int_{\T^n} (u^1-u^2) (m^2-m^1) \,dx\\
 \leq & - \int_{\T^n} \rho*(\rho*(m^1-m^2)) (m^1-m^2)\,dx + c_0  \int_{\T^n} |D(u^1-u^2)|^2 (m^1+m^2)\,dx\\
 \leq  &- \int_{\T^n \times \T^n} \rho(x-y) (\rho*(m^1-m^2))(y) (m^1-m^2)(x)\,dydx + 2c_0 M \int_{\T^n} |D(u^1-u^2)|^2\,dx\\
 = &-\int_{\T^n} \left| \rho*(m^1-m^2)(y) \right|^2\,dy +  2c_0 M\int_{\T^n} |D(u^1-u^2)|^2\,dx.
 \end{align*}
Thus,
\begin{equation}\label{imp-1}
\frac{d}{dt} \int_{\T^n} (u^1-u^2) (m^2-m^1) \,dx \leq -\int_{\T^n} \left| \rho*(m^1-m^2) \right|^2\,dx +2c_0 M \int_{\T^n} |D(u^1-u^2)|^2\,dx.
\end{equation}

Next, we take the difference of first equations of \eqref{MFG1} and \eqref{MFG2}, multiply by $2(u^1-u^2)$ and integrate to imply
\begin{align*}
&\frac{d}{dt} \int_{\T^n} (u^1-u^2)^2\,dx\\
 = & - 2\int_{\T^n} |D(u^1-u^2)|^2\,dx - 2\int_{\T^n} (H(Du^1)-H(Du^2))(u^1-u^2)\,dx\\
& \qquad \qquad\qquad \qquad \qquad\qquad \qquad\qquad\qquad + 2\int_{\T^n} \rho*(\rho*(m^1-m^2)) (u^1-u^2)\,dx\\
 \leq & -2 \int_{\T^n} |D(u^1-u^2)|^2\,dx + 2c_0 \int_{\T^n} |D(u^1-u^2)| \cdot |u^1-u^2|\,dx\\
 &\qquad \qquad\qquad \qquad \qquad\qquad \qquad\qquad\qquad +2\int_{\T^n} (\rho*(m^1-m^2))( \rho *(u^1-u^2))\,dx\\
 \leq & -\int_{\T^n} |D(u^1-u^2)|^2\,dx +c_0^2 \int_{\T^n} (u^1-u^2)^2\,dx+4\int_{\T^n} \left| \rho*(m^1-m^2) \right|^2\,dx \\
  &\qquad \qquad\qquad \qquad \qquad\qquad \qquad\qquad\qquad \qquad \qquad\qquad
  +\frac{1}{4} \int_{\T^n}  \left| \rho*(u^1-u^2) \right|^2\,dx,
\end{align*}
where we use Cauchy-Schwarz's inequality in the last line.
Besides,
\[
\int_{\T^n}  \left| \rho*(u^1-u^2) \right|^2\,dx \leq \int_{\T^n}  (u^1-u^2)^2\,dx.
\]
Hence,
\begin{multline}\label{imp-2}
\frac{d}{dt} \int_{\T^n} (u^1-u^2)^2\,dx \leq -\int_{\T^n} |D(u^1-u^2)|^2\,dx
+\left(\frac{1}{4}+c_0^2\right) \int_{\T^n} (u^1-u^2)^2\,dx\\
+4\int_{\T^n} \left| \rho*(m^1-m^2) \right|^2\,dx.
\end{multline}

We continue by taking the difference of the second equations of \eqref{MFG1} and \eqref{MFG2}, multiply by $2(m^1-m^2)$
and integrate
\begin{align*}
&\frac{d}{dt} \int_{\T^n} -(m^1-m^2)^2\,dx\\
= & -2\int_{\T^n} |D(m^1-m^2)|^2\,dx - 2\int_{\T^n}   (m^1-m^2) D(m^1-m^2)\cdot DH(Du^1) \,dx\\
& \qquad \qquad\qquad \qquad \qquad\qquad \qquad
-2 \int_{\T^n} m^2 D(m^1-m^2)\cdot (DH(Du^1)-DH(Du^2))\,dx\\
\leq & -2\int_{\T^n} |D(m^1-m^2)|^2\,dx + 2c_0 \int_{\T^n} |D(m^1-m^2)|\cdot |m^1-m^2|\,dx\\
& \qquad \qquad\qquad \qquad \qquad\qquad \qquad\qquad\qquad
+2c_0 M \int_{\T^n} |D(m^1-m^2)|\cdot |D(u^1-u^2)|\,dx\\
\leq & -(2-c_0(1+M))\int_{\T^n} |D(m^1-m^2)|^2\,dx + c_0 \int_{\T^n} \left((m^1-m^2)^2+M|D(u^1-u^2)|^2\right)\,dx.
\end{align*}
Note that Cauchy-Schwarz's inequality is used in the last line of the above computation.
Note further that $\int_{\T^n} (m^1-m^2)\,dx =0$. 
Hence, Poincar\'e's inequality gives
\[
\int_{\T^n} |D(m^1-m^2)|^2\,dx \geq \int_{\T^n} (m^1-m^2)^2\,dx.
\]
Combine this with the previous computation, we arrive at
\begin{equation}\label{imp-3}
\frac{d}{dt} \int_{\T^n} -(m^1-m^2)^2\,dx \leq -(2-c_0(2+M))\int_{\T^n} (m^1-m^2)^2\,dx
+ c_0 M \int_{\T^n}|D(u^1-u^2)|^2\,dx.
\end{equation}

Define
\[
\varphi(t) = \int_{\T^n} \left((u^1-u^2)(m^2-m^1)+\frac{(u^1-u^2)^2}{4} - (m^1-m^2)^2 \right)\,dx.
\]
Multiply \eqref{imp-2} by $\frac{1}{4}$, combine the result with \eqref{imp-1} and \eqref{imp-3} to get that
\begin{multline*}
\varphi'(t) \leq -\left(\frac{1}{4}-3c_0 M \right) \int_{\T^n} |D(u^1-u^2)|^2\,dx +\left(\frac{1}{16}+\frac{c_0^2}{4} \right) \int_{\T^n} (u^1-u^2)^2\,dx\\
-(2-c_0(2+M))\int_{\T^n} (m^1-m^2)^2\,dx.
\end{multline*}
We use (A4) and \eqref{eq:c0} to get further that
\begin{equation}\label{imp-4}
\varphi'(t) \leq \left(\frac{1}{16}+\frac{1}{80} \right) \int_{\T^n} (u^1-u^2)^2\,dx- \frac{7}{4} \int_{\T^n} (m^1-m^2)^2\,dx
\leq \frac{1}{2} \varphi(t).
\end{equation}
Thus, $t\mapsto e^{-t/2} \varphi(t)$ is non-increasing on $[0,T]$. Note that
\[
\varphi(0)=\int_{\T^n} - (m^1-m^2)^2 \,dx \leq 0 \quad \text{and} \quad 
\varphi(T)=\int_{\T^n} \frac{(u^1-u^2)^2}{4}\,dx \geq 0,
\]
which imply that $\varphi \equiv 0$ and in fact $(u^1,m^1)=(u^2,m^2)$.
\end{proof}

\begin{rem}\label{rem:restrict}
Condition (A4) is quite restrictive as it requires that both $\|DH\|_{L^\infty(\R^n)}$ and $\|D^2H\|_{L^\infty(\R^n)}$ are small enough (smaller than $c_0$).
In particular, if $M=\max_{\T^n} m_T$ is sufficiently large, then Theorem \ref{thm:unique} gives the uniqueness result in a perturbative regime only.
This is of course not so satisfying.
To some extend, this is related to the result of Ambrose \cite{Am}.

The monotonicity of $\varphi(t)$ is interesting in its own right. See \cite[Section 6]{GV} for some related discussions.
\end{rem}

We provide next another uniqueness result, where the appearance of a constant drift is allowed.

\begin{thm} \label{thm:unique-new}
Let $b\in \R^n$ be a fixed vector, and $K:\R^n \to \R$ be a smooth function such that  {\rm (A1)--(A4)} hold with $K$ in place of $H$.
Define $H:\R^n \to \R$ as 
\[
H(p) = b\cdot p + K(p) \quad \text{for all $p\in \R^n$.}
\]
Then \eqref{MFG} has at most one pair of solution  $(u,m)\in C^2_1(\T^n \times [0,T]) \times C([0,T],L^2(\T^n))$.
\end{thm}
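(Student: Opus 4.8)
The plan is to remove the constant drift $b$ by passing to a moving frame, thereby reducing the statement to Theorem \ref{thm:unique} applied to $K$ in place of $H$. The point is that although $H(p)=b\cdot p+K(p)$ satisfies $\|D^2H\|_{L^\infty(\R^n)}=\|D^2K\|_{L^\infty(\R^n)}\le c_0$, its gradient bound $\|DH\|_{L^\infty(\R^n)}\le|b|+c_0$ may be large, so (A4) can fail for $H$. A direct repetition of the proof of Theorem \ref{thm:unique} is therefore unavailable, since that argument uses $\|DH\|_{L^\infty(\R^n)}\le c_0$ at several places, for instance in bounding $\int_{\T^n}(H(Du^1)-H(Du^2))(u^1-u^2)\,dx$ and $\int_{\T^n}(m^1-m^2)\,D(m^1-m^2)\cdot DH(Du^1)\,dx$. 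The linear term $b\cdot p$ is precisely the obstruction, and it is exactly what the change of variables will eliminate.

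Concretely, given any solution $(u,m)\in C^2_1(\T^n\times[0,T])\times C([0,T],L^2(\T^n))$ of \eqref{MFG} with this $H$, I would set
\[
\widetilde u(x,t):=u(x+bt,t),\qquad \widetilde m(x,t):=m(x+bt,t),\qquad (x,t)\in\T^n\times[0,T].
\]
Since $b\in\R^n$ acts on $\T^n=\R^n/\Z^n$ by a smooth, time-dependent translation, this map preserves both $C^2_1(\T^n\times[0,T])$ and $C([0,T],L^2(\T^n))$. Writing $y=x+bt$, one has $\widetilde u_t=u_t(y,t)+b\cdot Du(y,t)$, $D\widetilde u=Du(y,t)$ and $\Del\widetilde u=\Del u(y,t)$, so the drift $b\cdot Du$ produced by $H$ cancels and the first equation of \eqref{MFG} becomes $\widetilde u_t+K(D\widetilde u)=\Del\widetilde u+\big(\rho*(\rho*m)\big)(x+bt,t)$. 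The key observation is that convolution commutes with translation, whence $\big(\rho*(\rho*m)\big)(x+bt,t)=\big(\rho*(\rho*\widetilde m)\big)(x,t)$; this is where the nonlocal structure of the coupling is essential. An analogous computation for the second equation, using $\text{div}(bm)=b\cdot Dm$ together with the covariance $\text{div}_y\!\big(DK(Du)\,m\big)(x+bt,t)=\text{div}_x\!\big(DK(D\widetilde u)\,\widetilde m\big)(x,t)$, shows that the two drift contributions again cancel and yields $-\widetilde m_t-\text{div}(DK(D\widetilde u)\,\widetilde m)=\Del\widetilde m$. Hence $(\widetilde u,\widetilde m)$ solves \eqref{MFG} with $H$ replaced by $K$, with initial datum $\widetilde u(\cdot,0)=u_0$ and terminal datum $\widetilde m(\cdot,T)=m_T(\cdot+bT)=:\widetilde m_T$.

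It remains to invoke uniqueness for the $K$-system. The translated datum $\widetilde m_T$ still satisfies (A3) and, crucially, has the same maximum $M=\max_{\T^n}m_T$, so (A4) continues to hold for $K$ with the same constant $c_0$. Since the proof of Theorem \ref{thm:unique} depends on the terminal datum only through $M$ (both via the pointwise bound $0\le\widetilde m^i\le M$ furnished by the maximum principle and via (A4)), it applies verbatim to the $K$-system and gives $\widetilde u^1=\widetilde u^2$ and $\widetilde m^1=\widetilde m^2$ for any two solutions $(u^i,m^i)$, $i=1,2$, of the original problem. Undoing the change of variables then yields $u^1=u^2$ and $m^1=m^2$. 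I expect the only genuinely substantive step to be the verification that the moving-frame transformation sends the $H$-system to the $K$-system, and in particular that the nonlocal coupling $\rho*(\rho*m)$ transforms covariantly; the remainder is a bookkeeping reduction to the already-established Theorem \ref{thm:unique}.
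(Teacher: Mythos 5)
Your proof is correct, but it takes a genuinely different route from the paper. The paper reruns the energy computation of Theorem \ref{thm:unique} and observes that the two places where the drift enters are integrals of exact divergences on the torus, namely $\int_{\T^n} b\cdot D\big((u^1-u^2)^2\big)\,dx=0$ and $\int_{\T^n} b\cdot D\big((m^1-m^2)^2\big)\,dx=0$, so the drift contributions vanish identically and the remaining estimates only ever see $K$. You instead pass to the moving frame $x\mapsto x+bt$, check that the transformation sends the $H$-system to the $K$-system (the $b\cdot Du$ term is absorbed into $\widetilde u_t$, the $\mathrm{div}(bm)$ term into $\widetilde m_t$, and the coupling transforms covariantly because convolution commutes with translation), note that the terminal datum becomes $m_T(\cdot+bT)$ with the same maximum $M$ so that (A3)--(A4) persist, and then quote Theorem \ref{thm:unique}. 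Both arguments are sound and both exploit that $b$ is constant; yours is a cleaner structural reduction that makes the result a genuine corollary of Theorem \ref{thm:unique}, but it leans on the translation invariance of every ingredient (the torus, the Laplacian, and the convolution coupling $\rho*(\rho*\,\cdot\,)$) and on the mild bookkeeping that weak solutions and the classes $C^2_1(\T^n\times[0,T])$ and $C([0,T],L^2(\T^n))$ are preserved under the space--time diffeomorphism. The paper's inline cancellation is less conceptual but more robust: for instance, it would survive replacing $b$ by a divergence-free field $b(x)$, where no moving frame is available, and it is what the paper wishes to highlight when it speaks of ``good cancelations corresponding to the constant drift term.''
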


It is clear that this uniqueness result is stronger that that in Theorem \ref{thm:unique}.
We choose to present the two results separately to emphasize an important point that
there are some good cancelations corresponding to the constant drift term.

\begin{proof}
The proof is basically the same as that of Theorem \ref{thm:unique} except the fact that we need to handle the drift term $b\cdot p$ in a careful manner.
We cannot just use brute force bounds here.
Let us provide the computations related to these terms here.

The first term we need to take care of is
\begin{align*}
&- 2\int_{\T^n} (H(Du^1)-H(Du^2))(u^1-u^2)\,dx\\
=\, & - 2\int_{\T^n} (K(Du^1)-K(Du^2))(u^1-u^2)\,dx - 2\int_{\T^n} b\cdot D(u^1-u^2)(u^1-u^2)\,dx\\
\leq \ & 2 c_0 \int_{\T^n} |D(u^1-u^2)| \cdot |u^1-u^2|\,dx  - \int_{\T^n} b \cdot D((u^1-u^2)^2)\,dx\\
=\,  & 2 c_0 \int_{\T^n} |D(u^1-u^2)| \cdot |u^1-u^2|\,dx.
\end{align*}
The second term that we need to pay attention to is handled in the same way
\begin{align*}
 &- 2\int_{\T^n}   (m^1-m^2) D(m^1-m^2)\cdot DH(Du^1) \,dx\\
 = \, & - 2\int_{\T^n}   (m^1-m^2) D(m^1-m^2)\cdot DK(Du^1) \,dx- 2\int_{\T^n}   (m^1-m^2) D(m^1-m^2)\cdot b  \,dx\\\
 =\, & - 2\int_{\T^n}   (m^1-m^2) D(m^1-m^2)\cdot DK(Du^1) \,dx \\
 \leq \, &  2 c_0 \int_{\T^n} |D(m^1-m^2)| \cdot |m^1-m^2|\,dx.
\end{align*}
\end{proof}
%%%%%%%%%%%%%%%%%%%%%%%%%%%%%%%%%%%%%%%%%%%%%%%%%%%%%%%%%%%%%%%%%%%%%%%%%%%%%
\bibliographystyle{amsplain}
%\bibliography{modulardiffeq}

\begin{thebibliography}{1}

\bibitem{Am}
D. M. Ambrose,
\emph{Strong solutions for time-dependent mean field games with non-separable Hamiltonians},
arXiv:1605.01745 [math.AP].


\bibitem{CGMT}
F. Cagnetti, D. Gomes, H. Mitake, H. V. Tran, 
\emph{A new method for large time behavior of convex Hamilton--Jacobi equations: degenerate equations and weakly coupled systems}, 
{Annales de l'Institut Henri Poincar\'e - Analyse non lin\'eaire} 32 (2015), 183--200.


\bibitem{Car0}
P. Cardaliaguet,
Introduction to differential games, 2010.

\bibitem{Car1}
P. Cardaliaguet,
Notes on Mean Field Games, 2012.



\bibitem{Ev}
L. C. Evans,
\emph{Adjoint and compensated compactness methods for Hamilton--Jacobi PDE}, 
Archive for Rational Mechanics and Analysis {\bf 197} (2010), 1053--1088.

\bibitem{EvS}
L.C. Evans, P. E. Souganidis,
\emph{Differential games and representation formulas for solutions of  Hamilton-Jacobi-Isaacs equations},
Indiana Univ. Math. J. 33 (no. 5) 1984, 773--797.

\bibitem{GLL}
O. Gu\'eant, J.-M. Lasry and  P.-L. Lions, 
Mean field games and applications, 
in Paris-Princeton Lectures on Mathematical Finance 2010, 
Lecture Notes in Math., Springer, Berlin, 2011, pp. 205--266.


\bibitem{GPV}
D. A. Gomes, E. A. Pimentel and V. Voskanyan,
Regularity Theory for Mean-Field Game Systems,
Springer Briefs in Mathematics.

\bibitem{GS}
D. Gomes and J. Sa\'ude,
\emph{Mean field games models--a brief survey}, 
Dynam. Games Appl. 4 (2014) 110--154.

\bibitem{GV}
D. A. Gomes  and V. Voskanyan,
\emph{Extended deterministic mean-field games},
SIAM J. Control Optim. 54 (2016), no. 2, 1030--1055. 


\bibitem{CHM1}
M. Huang, P. E. Caines and R. P. Malham\'e,
\emph{Large population stochastic dynamic games: closed-loop McKean-Vlasov systems and the Nash certainty equivalence principle}, 
{Communication in information and systems}, Vol. 6, No. 3, pp. 221-252, 2006.


\bibitem{CHM2}
M. Huang, P. E. Caines and R. P. Malham\'e,
\emph{Large-population cost-coupled LQG problems with nonuniform agents: 
individual-mass behavior and decentralized $\ep$-Nash equilibria},
{IEEE Trans. Automat. Control} 52(9), 1560--1571 (2007).

\bibitem{LaLi1}
J.-M. Lasry and P.-L. Lions, 
\emph{Jeux a champ moyen. I. Le cas stationnaire}, 
{C. R. Math. Acad. Sci. Paris} 343 (2006), no. 9, 619--625.


\bibitem{LaLi2}
J.-M. Lasry and P.-L. Lions, 
\emph{Jeux a champ moyen. II. Horizon fini et contr\^ole optimal},
{C. R. Math. Acad. Sci. Paris} 343 (2006), no. 10, 679--684.

\bibitem{LaLi3}
J.-M. Lasry and P.-L. Lions, 
\emph{Mean field games},
{Jpn. J. Math.} 2 (2007), no. 1, 229--260.

\bibitem{Li}
P.-L. Lions, 
College de France course on mean-field games, 2007--2011,
 http://www.college-de-france.fr/site/pierre-louis-lions/.
 
 \bibitem{MT-LN}
 H. Mitake and H. V. Tran,
 Dynamical properties of Hamilton--Jacobi equations via the nonlinear adjoint method: Large time behavior and Discounted approximation,
 Springer Lecture Notes in Mathematics, to appear.

\bibitem{Tr}
H. V. Tran, 
\emph{Adjoint methods for static Hamilton-Jacobi equations},
Calc. Var. Partial Differential Equations {\bf 41} (2011), 301--319. 

\end{thebibliography}
%\nocite{*}
\providecommand{\bysame}{\leavevmode\hbox to3em{\hrulefill}\thinspace}
\providecommand{\MR}{\relax\ifhmode\unskip\space\fi MR }
% \MRhref is called by the amsart/book/proc definition of \MR.
\providecommand{\MRhref}[2]{%
  \href{http://www.ams.org/mathscinet-getitem?mr=#1}{#2}
}
\providecommand{\href}[2]{#2}

\end{document}